\numberwithin{equation}{section}
\DeclareMathOperator{\td}{d\mspace{-2mu}}
\theoremstyle{plain}
\newtheorem{thm}{Theorem}[section]
\newtheorem{lem}{Lemma}[section]
\newtheorem{cor}{Corollary}[section]
\theoremstyle{remark}
\newtheorem{rem}{Remark}[section]
\theoremstyle{definition}
\newtheorem{dfn}{Definition}[section]
\newcommand{\N}{\mathbb{N}}
\newcommand{\abs}[1]{\left\vert#1\right\vert}
\begin{document}

\title[Integral representations of geometric and logarithmic means]
{Integral representations of the weighted geometric mean and the logarithmic mean}

\author[F. Qi]{Feng Qi}
\address[Feng Qi]{Department of Mathematics, School of Science, Tianjin Polytechnic University, Tianjin City, 300387, China}
\email{\href{mailto: F. Qi <qifeng618@gmail.com>}{qifeng618@gmail.com}, \href{mailto: F. Qi <qifeng618@hotmail.com>}{qifeng618@hotmail.com}, \href{mailto: F. Qi <qifeng618@qq.com>}{qifeng618@qq.com}}
\urladdr{\url{http://qifeng618.wordpress.com}}

\author[X.-J. Zhang]{Xiao-Jing Zhang}
\address[Xiao-Jing Zhang]{Department of Mathematics, School of Science, Tianjin Polytechnic University, Tianjin City, 300387, China}
\email{\href{mailto: X.-J. Zhang <xiao.jing.zhang@qq.com>}{xiao.jing.zhang@qq.com}}

\author[W.-H. Li]{Wen-Hui Li}
\address[Wen-Hui Li]{Department of Mathematics, School of Science, Tianjin Polytechnic University, Tianjin City, 300387, China}
\email{\href{mailto: W.-H. Li <wen.hui.li@foxmail.com>}{wen.hui.li@foxmail.com}}

\begin{abstract}
In the paper, the authors show that the weighted geometric mean and the logarithmic mean are Bernstein functions and establish integral representations of these means by Cauchy's integral theorem in the theory of complex functions.
\end{abstract}

\keywords{Integral representation; Bernstein function; Weighted geometric mean; Logarithmic mean; Induction; Cauchy's integral theorem; Completely monotonic function; Logarithmically completely monotonic function; Stieltjes transform}

\subjclass[2010]{Primary 26E60, 30E20; Secondary 26A48, 44A20}

\thanks{This paper was typeset using \AmS-\LaTeX}

\maketitle

\section{Introduction}

\subsection{Some definitions}
We recall some notions and definitions.

\begin{dfn}[\cite{mpf-1993, widder}]
A function $f$ is said to be completely monotonic on an interval $I$ if $f$ has derivatives of all orders on $I$ and
\begin{equation}
(-1)^{n}f^{(n)}(t)\ge0
\end{equation}
for $x \in I$ and $n \ge0$.
\end{dfn}

\begin{dfn}[\cite{Atanassov}]
If $f^{(k)}(t)$ for some nonnegative integer $k$ is completely monotonic on an interval $I$, but $f^{(k-1)}(t)$ is not completely monotonic on $I$, then $f(t)$ is called a completely monotonic function of $k$-th order on an interval $I$.
\end{dfn}

\begin{dfn}[\cite{compmon2, minus-one}]
A function $f$ is said to be logarithmically completely monotonic on an interval $I$ if its logarithm $\ln f$ satisfies
\begin{equation}
(-1)^k[\ln f(t)]^{(k)}\ge0
\end{equation}
for $k\in\mathbb{N}$ on $I$.
\end{dfn}

\begin{dfn}[\cite{Schilling-Song-Vondracek-2010, widder}]
A function $f:I\subseteq(-\infty,\infty)\to[0,\infty)$ is called a Bernstein function on $I$ if $f(t)$ has derivatives of all orders and $f'(t)$ is completely monotonic on $I$.
\end{dfn}

\begin{dfn}[{\cite[p.~19, Definition~2.1]{Schilling-Song-Vondracek-2010}}]
A Stieltjes function is a function $f:(0,\infty)\to[0,\infty)$ which can be written in the form
\begin{equation}\label{dfn-stieltjes}
f(x)=\frac{a}x+b+\int_0^\infty\frac1{s+x}{\td\mu(s)},
\end{equation}
where $a,b\ge0$ are nonnegative constants and $\mu$ is a nonnegative measure on $(0,\infty)$ such that
\begin{equation*}
\int_0^\infty\frac1{1+s}\td\mu(s)<\infty.
\end{equation*}
\end{dfn}

In the newly-published paper~\cite{psi-proper-fraction-degree-two.tex}, a new notion ``completely monotonic degree'' of functions on $(0,\infty)$ was naturally introduced and initially studied.
\par
It has been proved in~\cite{CBerg, absolute-mon-simp.tex, compmon2, minus-one} that a logarithmically completely monotonic function on an interval $I$ must be completely monotonic on $I$. The set of logarithmically completely monotonic functions on $(0,\infty)$ contains all Stieltjes functions, see~\cite{CBerg} or~\cite[Remark~4.8]{Open-TJM-2003-Banach.tex}.
\par
It is obvious that any nonnegative completely monotonic function of first order is a Bernstein function.
\par
Bernstein functions can be characterized by~\cite[p.~15, Theorem~3.2]{Schilling-Song-Vondracek-2010} which states that a function $f:(0,\infty)\to\mathbb{R}$ is a Bernstein function if and only if it admits the representation
\begin{equation}
f(x)=a+bx+\int_0^\infty\bigl(1-e^{-xt}\bigr)\td\mu(t),
\end{equation}
where $a,b\ge0$ and $\mu$ is a measure on $(0,\infty)$ satisfying
\begin{equation*}
\int_0^\infty\min\{1,t\}\td\mu(t)<\infty.
\end{equation*}
\par
The relation between Bernstein functions and logarithmically completely monotonic functions was discovered in~\cite[pp.~161\nobreakdash--162, Theorem~3]{Chen-Qi-Srivastava-09.tex} and~\cite[p.~45, Proposition~5.17]{Schilling-Song-Vondracek-2010}, which reads that the reciprocal of any Bernstein function is logarithmically completely monotonic.

\subsection{Some means}

We also recall that the extended mean value $E(r,s;x,y)$ may be defined as
\begin{align}
E(r,s;x,y)&=\biggl[\frac{r(y^s-x^s)} {s(y^r-x^r)}\biggr]^{{1/(s-r)}}, & rs(r-s)(x-y)&\ne 0; \\
E(r,0;x,y)&=\biggl[\frac{y^r-x^r}{r(\ln y-\ln x)}\biggr]^{{1/r}}, & r(x-y)&\ne 0; \\
E(r,r;x,y)&=\frac1{e^{1/r}}\biggl(\frac{x^{x^r}}{y^{y^r}}\biggr)^{ {1/(x^r-y^r)}},& r(x-y)&\ne 0; \\
E(0,0;x,y)&=\sqrt{xy}\,, & x&\ne y; \\
E(r,s;x,x)&=x, & x&=y;\notag
\end{align}
where $x$ and $y$ are positive numbers and $r,s\in\mathbb{R}$. Because this mean was first defined in~\cite{stolarsky-mean}, so it is also called Stolarsky's mean. Many special means with two positive variables are special cases of $E$, for example,
\begin{align*}
E(r,2r;x,y)&=M_r(x,y), &&(\text{power mean}) \\
E(1,p;x,y)&=L_p(x,y), &&(\text{generalized logarithmic mean}) \\
E(1,1;x,y)&=I(x,y), &&(\text{exponential mean}) \\
E(1,2;x,y)&=A(x,y), &&(\text{arithmetic mean}) \\
E(0,0;x,y)&=G(x,y), &&(\text{geometric mean}) \\
E(-2,-1;x,y)&=H(x,y), &&(\text{harmonic mean}) \\
E(0,1;x,y)&=L(x,y). &&(\text{logarithmic mean})
\end{align*}
For more information on $E$, please refer to the monograph~\cite{bullenmean}, the papers~\cite{emv-log-convex-simple.tex, Guo-Qi-Filomat-2011-May-12.tex, Qi-Springer-2012-Srivastava.tex}, and a lot of closely-related references therein.

\subsection{The arithmetic mean is a Bernstein function}

It is easy to see that the arithmetic mean
$$
A_{x,y}(t)=A(x+t,y+t)=A(x,y)+t
$$
is a trivial Bernstein function of $t\in(-\min\{x,y\},\infty)$ for $x,y>0$.

\subsection{The harmonic mean is a Bernstein function}

In~\cite{Geomeric-Mean-Complete-revised.tex}, the harmonic mean
\begin{equation}
H_{x,y}(t)=H(x+t,y+t)=\frac2{\frac1{x+t}+\frac1{y+t}}
\end{equation}
for $t\in(-\min\{x,y\},\infty)$ and $x,y>0$ with $x\ne y$ was proved to be a Bernstein function and
\begin{gather}\label{H-int-reprent}
H_{x,y}(s)=H(x,y)+s+\frac{(x-y)^2}4 \int_0^\infty\bigl(1-e^{-su}\bigr) e^{-(x+y)u/2}\td u,\\
\label{H-int-reprent-A=H}
H(x,y)=A(x,y)-\frac{(x-y)^2}2 \int_0^\infty e^{-(x+y)u}\td u,\\
H(s,y+s)=s+\frac{y^2}4 \int_0^\infty\bigl(1-e^{-su}\bigr) e^{-yu/2}\td u.
\end{gather}

\subsection{The exponential mean is a Bernstein function}

In~\cite[p.~116, Remark~6]{new-upper-kershaw-JCAM.tex}, it was pointed out that the reciprocal of the exponential mean
\begin{equation}
I_{x,y}(t)=I(x+t,y+t)=\frac1e\biggl[\frac{(x+t)^{x+t}}{(y+t)^{y+t}}\biggr]^{ {1/(x-y)}}
\end{equation}
for $x,y>0$ with $x\ne y$ is a logarithmically completely monotonic function of $t\in(-\min\{x,y\},\infty)$ and that, by using
\begin{equation}
I(x,y)=\exp\biggl(\frac1{y-x}\int_x^y\ln u\td u\biggr),
\end{equation}
the exponential mean $I_{x,y}(t)$ for $t>-\min\{x,y\}$ with $x\ne y$ is also a completely monotonic function of first order (that is, a Bernstein function).

\subsection{The logarithmic mean is a Bernstein function}

In~\cite[p.~616]{gamma-psi-batir.tex}, it was concluded that the logarithmic mean
\begin{equation}\label{log-mean+t-eq}
L_{x,y}(t)=L(x+t,y+t)
\end{equation}
is increasing and concave in $t>-\min\{x,y\}$ for $x,y>0$ with $x\ne y$. More strongly, it was proved in~\cite[Theorem~1]{log-mean-comp-mon.tex-mia} that the logarithmic mean $L_{x,y}(t)$ for $x,y>0$ with $x\ne y$ is a completely monotonic function of first order in $t\in(-\min\{x,y\},\infty)$, that is, the logarithmic mean $L_{x,y}(t)$ is a Bernstein function of $t\in(-\min\{x,y\},\infty)$. The proof of~\cite[Theorem~1]{log-mean-comp-mon.tex-mia} is based on making use of the integral representation
\begin{equation}\label{Log-Mean-int-first}
L(x,y)=\int_0^1x^{u}y^{1-u}\td u
\end{equation}
and proving that the weighted geometric mean
\begin{equation}\label{weight-geometric=t}
G_{x,y;\lambda}(t)=(x+t)^{\lambda}(y+t)^{1-\lambda}
\end{equation}
for $\lambda\in(0,1)$ and $x,y\in\mathbb{R}$ with $x\ne y$ is a Bernstein function of $t>-\min\{x,y\}$.

\subsection{The geometric mean is a Bernstein function}

After the weighted geometric mean $G_{x,y;\lambda}(t)$ was proved in~\cite{log-mean-comp-mon.tex-mia} to be a Bernstein function, the statement that the geometric mean $G_{x,y;1/2}(t)$ is a Bernstein function was recently recovered in~\cite{Geomeric-Mean-Complete-revised.tex} by several approaches. More importantly, the integral representation
\begin{equation}\label{G(x-y)-integ-exp}
G_{x,y;1/2}(z)=G(x,y)+z+\frac{(x-y)^2}{2\pi}\int_0^\infty\frac{\rho((x-y)s)}s e^{-ys}\bigl(1-e^{-sz}\bigr)\td s
\end{equation}
for $x>y>0$ and $z\in\mathbb{C}\setminus(-\infty,-y]$ was established in~\cite{Geomeric-Mean-Complete-revised.tex}, where
\begin{equation}\label{rho-funct-dfn}
\begin{aligned}
\rho(s)&=\int_0^{1/2} q(u)\bigl[1-e^{-(1-2u)s}\bigr]e^{-us}\td u\\
&=\int_0^{1/2} q\biggl(\frac12-u\biggr)\bigl(e^{us}-e^{-us}\bigr)e^{-s/2}\td u\\
&\ge0
\end{aligned}
\end{equation}
on $(0,\infty)$ and
\begin{equation}
q(u)=\sqrt{\frac1u-1}\,-\frac1{\sqrt{1/u-1}\,}
\end{equation}
on $(0,1)$.
\par
Let $0<a_k\le a_{k+1}$ for $1\le k\le n-1$ and $a+z=(a_1+z,a_2+z,\dotsc,a_n+z)$ for $z\in\mathbb{C}\setminus(-\infty,-a_1]$. Then the geometric mean
$$
G_n(a+z)=\sqrt[n]{\prod_{\ell=1}^n(a_\ell+z)}
$$
has the integral representation
\begin{equation}\label{AG-New-eq1}
G_n(a+z)=A_n(a)+z-\frac1\pi\sum_{\ell=1}^{n-1}\sin\frac{\ell\pi}n \int_{a_\ell}^{a_{\ell+1}} \Biggl|\prod_{k=1}^n(a_k-t)\Biggr|^{1/n} \frac{\td t}{t+z}.
\end{equation}
This conclusion was gained in~\cite{AG-Ineq-New-Proof.tex}. From this, it is easy to see that the geometric mean $G_n(a+t)$ is a Bernstein function of $t\in(-a_1,\infty)$. More interestingly, the well-known inequality between $A_n(a)$ and $G_n(a)$ can be derived from~\eqref{AG-New-eq1}.

\subsection{Main results of this paper}

In this paper, we will find an integral representation of the weighted geometric mean $G_{x,y;\lambda}(t)$ defined by~\eqref{weight-geometric=t}, and then substitute it into~\eqref{Log-Mean-int-first} to obtain a new integral representation of the logarithmic mean.

\section{Lemmas}

For proving our main results, we need the following lemmas.

\begin{lem}\label{Test-thm-h(t)}
For $t>0$ and $\alpha\in(-1,1)$, let
\begin{equation}
h_\alpha(t)=\biggl(1+\frac1t\biggr)^\alpha.
\end{equation}
Then the derivatives of $h_\alpha(t)$ can be computed by
\begin{equation}\label{h-alpha=m-deriv}
h_\alpha^{(m)}(t)=\frac{(-1)^m}{t^{m}(1+t)^{m}}\biggl(1+\frac1t\biggr)^{\alpha} \sum_{k=0}^{m-1}{a_{\alpha,m,k}t^k},
\end{equation}
where $m\in\N$ and
\begin{equation}\label{a-alpha-m-k}
a_{\alpha,m,k}=k!\binom{m}{k}\binom{m-1}{k} \prod_{\ell=0}^{m-k-1}(\alpha+\ell).
\end{equation}
Consequently,
\begin{enumerate}
\item
if $\alpha\in(0,1)$, the function $h_\alpha(t)$ is completely monotonic on $(0,\infty)$;
\item
if $\alpha\in(-1,0)$, the function $h_\alpha(t)$ is a Bernstein function on $(0,\infty)$;
\item
the derivatives of the function
\begin{equation}
H_\alpha(t)=\frac{h_\alpha(t)}\alpha-\frac{h_{\alpha-1}(t)}{\alpha-1}
\end{equation}
may be calculated by
\begin{equation}
H_\alpha^{(m)}(t)=\frac{(-1)^m}{t^{m}(1+t)^{m+1}}\biggl(1+\frac1t\biggr)^{\alpha} \sum_{k=0}^{m-1}{b_{\alpha,m,k}t^k},
\end{equation}
where $m\in\N$ and
\begin{equation}
b_{\alpha,m,k}=k!\binom{m+1}{k}\binom{m-1}{k} \prod_{\ell=1}^{m-k-1}(\alpha+\ell);
\end{equation}
\item
the function $H_\alpha(t)$ is completely monotonic for all $\alpha\in(0,1)$ on $(0,\infty)$.
\end{enumerate}
\end{lem}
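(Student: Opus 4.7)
The plan is to prove the explicit derivative formula~\eqref{h-alpha=m-deriv} first by induction on $m$, then read items (1)--(2) off as sign statements, then deduce the formula in (3) from~\eqref{h-alpha=m-deriv} algebraically, and finally read (4) off signs again.

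For~\eqref{h-alpha=m-deriv}, the base case $m=1$ is a one-line computation after rewriting $(1+1/t)^{\alpha-1}=(1+1/t)^{\alpha}\cdot\tfrac{t}{t+1}$. Writing $P_m(t)=\sum_{k=0}^{m-1}a_{\alpha,m,k}t^k$ and differentiating the claimed RHS once (applying the product rule on the three factors $(-1)^m/[t^m(1+t)^m]$, $(1+1/t)^\alpha$, and $P_m(t)$) regroups into the recursion
\[
P_{m+1}(t)=(\alpha+m+2mt)\,P_m(t)-t(1+t)\,P_m'(t).
\]
Matching the coefficient of $t^k$ reduces the induction step to the Pascal-type identity $\binom{m-1}{k}=\frac{m-k}{k}\binom{m-1}{k-1}$ combined with $\binom{m+1}{k}=\binom{m}{k}+\binom{m}{k-1}$, which is routine; the boundary coefficients $a_{\alpha,m+1,0}=(\alpha+m)a_{\alpha,m,0}$ and $a_{\alpha,m+1,m}=(m+1)a_{\alpha,m,m-1}$ are checked directly.

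Once~\eqref{h-alpha=m-deriv} is established, items (1) and (2) become sign inspections. The only $\alpha$-dependent factor in $a_{\alpha,m,k}$ is the Pochhammer-like product $\prod_{\ell=0}^{m-k-1}(\alpha+\ell)$. For $\alpha\in(0,1)$ every factor is positive, so $a_{\alpha,m,k}>0$ and $(-1)^m h_\alpha^{(m)}(t)\ge0$; for $\alpha\in(-1,0)$ only the $\ell=0$ factor $\alpha$ is negative while $\alpha+1,\alpha+2,\dotsc$ are positive, so $a_{\alpha,m,k}<0$, which gives $(-1)^{m-1}h_\alpha^{(m)}(t)\ge0$ for $m\ge1$. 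Combined with $h_\alpha(t)>0$ this means $h_\alpha'$ is completely monotonic, i.e.\ $h_\alpha$ is Bernstein.

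For (3) I apply~\eqref{h-alpha=m-deriv} to each summand in $H_\alpha^{(m)}=h_\alpha^{(m)}/\alpha-h_{\alpha-1}^{(m)}/(\alpha-1)$. Division by $\alpha$ cancels the $\ell=0$ factor of $a_{\alpha,m,k}$, and division by $\alpha-1$ cancels the leftmost factor of the shifted product in $a_{\alpha-1,m,k}$. Using $(1+1/t)^{\alpha-1}=(1+1/t)^{\alpha}\cdot t/(t+1)$ to equalize the exponents and clearing denominators to $t^m(1+t)^{m+1}$ produces a polynomial whose coefficient of $t^k$ has the shape $C_k+C_{k-1}-D_{k-1}$, where $C_k$ and $D_k$ are the two reduced coefficient sequences; a second application of the same Pascal-type identity collapses this to $k!\binom{m+1}{k}\binom{m-1}{k}\prod_{\ell=1}^{m-k-1}(\alpha+\ell)$, while the coefficient of $t^m$ vanishes automatically since $C_{m-1}=D_{m-1}=m!$. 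Then (4) follows by inspection: in $b_{\alpha,m,k}$ every factor of the product $\prod_{\ell=1}^{m-k-1}(\alpha+\ell)$ is positive for any $\alpha>-1$, so $b_{\alpha,m,k}\ge0$ and $(-1)^m H_\alpha^{(m)}(t)\ge0$ on $(0,\infty)$; that $H_\alpha(t)>0$ is immediate from the signs of the two summands in its definition, completing complete monotonicity.

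The main obstacle will be (3): juggling two different exponents of $1+1/t$, two different denominators, and two different Pochhammer-like products without an off-by-one error, then recognizing that the resulting bracket is exactly $k!\binom{m+1}{k}\binom{m-1}{k}\prod_{\ell=1}^{m-k-1}(\alpha+\ell)$. Everything else is either a direct differentiation or a sign inspection on \eqref{h-alpha=m-deriv}.
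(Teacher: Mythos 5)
Your proof is correct and takes essentially the same route as the paper: the formula \eqref{h-alpha=m-deriv} is established by induction on $m$, and your recursion $P_{m+1}(t)=(\alpha+m+2mt)P_m(t)-t(1+t)P_m'(t)$ is exactly the coefficient recursion $a_{\alpha,m+1,k}=(m+\alpha-k)a_{\alpha,m,k}+(2m-k+1)a_{\alpha,m,k-1}$ appearing in the paper's induction step. The paper omits items (1)--(4) as ``straightforward''; your sign inspections for (1), (2), (4) and your algebraic derivation of (3) from \eqref{h-alpha=m-deriv} (including the cancellation $C_{m-1}=D_{m-1}=m!$ of the $t^m$ coefficient and the reduction to $\binom{m}{k}+\binom{m}{k-1}=\binom{m+1}{k}$ together with $(m-k)\binom{m-1}{k-1}=k\binom{m-1}{k}$) correctly supply those omitted details.
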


\begin{proof}
It is easy to see that
\begin{equation*}
h_\alpha'(t)=-\alpha\biggl(1+\frac1t\biggr)^{\alpha-1}\frac1{t^2} =-\alpha\biggl(1+\frac1t\biggr)^\alpha\frac1{t(1+t)}.
\end{equation*}
This means that the formulas~\eqref{h-alpha=m-deriv} and~\eqref{a-alpha-m-k} are valid for $m=1$.
\par
Assume that the formulas~\eqref{h-alpha=m-deriv} and~\eqref{a-alpha-m-k} are valid for some $m>1$. By this inductive hypothesis, a simple calculation gives
\begin{multline*}
h_\alpha^{(m+1)}(t)=\bigl[h_\alpha^{(m)}(t)\bigr]'
=\Biggl[\frac{(-1)^m}{t^{m}(1+t)^{m}}\biggl(1+\frac1t\biggr)^{\alpha} \sum_{k=0}^{m-1}{a_{\alpha,m,k}t^k}\Biggr]'\\
\begin{aligned}
&=(-1)^{m+1}\sum_{k=0}^{m-1}\frac{a_{\alpha,m,k}} {[t^{m+\alpha-k}(1+t)^{m-\alpha}]^2}\\
&\quad\times\bigl[(m+\alpha-k)t^{m+\alpha-k-1}(1+t)^{m-\alpha}  +(m-\alpha)t^{m+\alpha-k}(1+t)^{m-\alpha-1}\bigr]\\
&=\frac{(-1)^{m+1}}{t^{m+1}(1+t)^{m+1}}\biggl(1+\frac1t\biggr)^\alpha \sum_{k=0}^{m-1}a_{\alpha,m,k}[m+\alpha-k+(2m-k)t]t^k\\ &=\frac{(-1)^{m+1}}{t^{m+1}(1+t)^{m+1}}\biggl(1+\frac1t\biggr)^\alpha \Biggl\{(m+\alpha)a_{\alpha,m,0}+(m+1)a_{\alpha,m,m-1}t^m\\
&\quad +\sum_{k=1}^{m-1}[(m+\alpha-k){a_{\alpha,m,k}}+(2m-k+1){a_{\alpha,m,k-1}}]t^k\Biggr\}\\
&=\frac{(-1)^{m+1}}{t^{m+1}(1+t)^{m+1}}\biggl(1+\frac1t\biggr)^\alpha \sum_{k=0}^{m}{a_{\alpha,m+1,k}t^k}.
\end{aligned}
\end{multline*}
This shows that the formulas~\eqref{h-alpha=m-deriv} and~\eqref{a-alpha-m-k} are valid for all $m\ge1$.
\par
The left proofs are straightforward, so we omit them. The proof of Lemma~\ref{Test-thm-h(t)} is completed.
\end{proof}

\begin{lem}\label{weig-geom-lem2}
For $\alpha\in(-1,1)$ and $z\in\mathbb{C}\setminus(-\infty,0]$, the principal branch of the function $h_\alpha(z)$ has the integral representation
\begin{equation}\label{weighted-geometric-eq1}
h_\alpha(z)=1+\frac{\sin(\alpha\pi)}{\pi} \int_0^1\biggl(\frac1u-1\biggr)^\alpha\frac{\td u}{u+z}.
\end{equation}
Consequently, the function $h_\alpha(t)$ is a Stieltjes function for $\alpha\in(0,1)$ and a Bernstein function for $\alpha\in(-1,0)$ on $(0,\infty)$.
\end{lem}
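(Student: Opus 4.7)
The plan is to prove \eqref{weighted-geometric-eq1} by applying Cauchy's integral theorem to a keyhole contour enveloping the branch cut of $h_\alpha$, and then to read off the Stieltjes and Bernstein properties by inspection. The principal branch of $h_\alpha(w)=(1+1/w)^\alpha$ is holomorphic on $\mathbb{C}\setminus[-1,0]$, because $1+1/w$ lies in $(-\infty,0]$ precisely when $w\in[-1,0)$. Moreover $h_\alpha(w)-1=\alpha/w+O(1/|w|^2)$ as $|w|\to\infty$, $h_\alpha(w)-1=O(|w|^{-\alpha})$ as $w\to 0$, and $h_\alpha(w)=O(|w+1|^\alpha)$ as $w\to -1$. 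Fix $z\in\mathbb{C}\setminus(-\infty,0]$ and apply Cauchy's theorem to $f(w)=[h_\alpha(w)-1]/(w-z)$ on the region $\Omega$ bounded by a large circle $C_R$ (taken counterclockwise with $R>\max\{1,|z|\}$) and a dumbbell contour $\Gamma$ hugging $[-1,0]$ with $\Omega$ on its left, so that the only pole of $f$ inside $\Omega$ is the simple pole at $w=z$ with residue $h_\alpha(z)-1$.

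The starting identity is
\[
\oint_{C_R}f(w)\,\td w+\oint_\Gamma f(w)\,\td w=2\pi i[h_\alpha(z)-1].
\]
First I would let $R\to\infty$: the $O(1/|w|^2)$ decay of the integrand kills the $C_R$ integral. Next I would shrink the small arcs at $w=0$ and $w=-1$ to zero; the estimates $|f|\,|\td w|=O(\rho^{1-\alpha})$ near $w=0$ and $O(\rho^{1+\alpha})$ near $w=-1$ both tend to $0$ throughout $\alpha\in(-1,1)$. What remains is the sum of two integrals along the cut. Parametrizing $w=-u\pm i0$ with $u\in(0,1)$, the principal-branch boundary values satisfy $h_\alpha(-u+i0)=(1/u-1)^\alpha e^{-i\alpha\pi}$, because $1+1/w$ approaches $-(1/u-1)$ from the third quadrant, whose argument tends to $-\pi$, and symmetrically $h_\alpha(-u-i0)=(1/u-1)^\alpha e^{+i\alpha\pi}$. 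Assembling the upper edge traversed from $-1$ to $0$ and the lower edge traversed from $0$ to $-1$ (both with $\td w=-\td u$) produces
\[
\oint_\Gamma f(w)\,\td w=2i\sin(\alpha\pi)\int_0^1\frac{(1/u-1)^\alpha}{u+z}\,\td u,
\]
and dividing by $2\pi i$ delivers \eqref{weighted-geometric-eq1}.

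The consequences then follow by inspection of \eqref{weighted-geometric-eq1}. When $\alpha\in(0,1)$ the factor $\sin(\alpha\pi)/\pi$ is positive, and the density $(1/u-1)^\alpha$ is integrable on $(0,1)$ (integrable at $u=0$ since $\alpha<1$ and at $u=1$ since $\alpha>-1$), so the formula exhibits $h_\alpha$ in the Stieltjes form \eqref{dfn-stieltjes} with $a=0$, $b=1$, and measure $\td\mu(u)=\frac{\sin(\alpha\pi)}{\pi}(1/u-1)^\alpha\,\td u$ extended by zero to $(0,\infty)$. When $\alpha\in(-1,0)$ the coefficient $\sin(\alpha\pi)/\pi$ is negative, and differentiation under the integral sign gives
\[
(-1)^{n-1}h_\alpha^{(n)}(t)=-\frac{n!\sin(\alpha\pi)}{\pi}\int_0^1\frac{(1/u-1)^\alpha}{(u+t)^{n+1}}\,\td u\ge 0
\]
for every $n\ge 1$; since $h_\alpha$ is visibly positive on $(0,\infty)$, this shows $h_\alpha'$ is completely monotonic, and hence $h_\alpha$ is a Bernstein function. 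The main obstacle I anticipate is the careful bookkeeping of signs and orientations around the cut, since identifying the principal arguments of $1+1/w$ on the two edges correctly (as $\mp\pi$) is what determines whether the final coefficient is $+\sin(\alpha\pi)/\pi$ or its negative.
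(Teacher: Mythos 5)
Your argument is correct and is essentially the paper's own proof: Cauchy's integral formula applied to the principal branch around its branch cut, with the same boundary values $(1/u-1)^\alpha e^{\mp i\alpha\pi}$ on the two edges producing the jump $2i\sin(\alpha\pi)(1/u-1)^\alpha$ on $(0,1)$ and hence \eqref{weighted-geometric-eq1}; the only (harmless) differences are that you integrate $[h_\alpha(w)-1]/(w-z)$ around a dumbbell hugging $[-1,0]$, so the outer circle contributes nothing, whereas the paper integrates $h_\alpha(w)/(w-z)$ around all of $(-\infty,0]$ and recovers the constant $1$ from the large-circle limit. One tiny slip worth fixing: near $w=0$ the bound $h_\alpha(w)-1=O(|w|^{-\alpha})$ holds only for $\alpha\in(0,1)$ (for $\alpha\in(-1,0)$ one has $h_\alpha(w)\to0$, so the difference is merely $O(1)$), but the small-arc contribution is then $O(\rho)$ and still vanishes, so the conclusion is unaffected.
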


\begin{proof}
By standard arguments, we can obtain immediately that
\begin{gather}\label{weighted-geometric-eq2}
\lim_{z\to0}[zh_\alpha(z)]=\lim_{z\to0}\biggl[z\biggl(1+\frac1z\biggr)^\alpha\biggr] =\lim_{z\to0}\bigl[z^{1-\alpha}(1+z)^\alpha\bigr]=0,\\
\lim_{z\to\infty}h_\alpha(z)=\lim_{z\to\infty}\biggl(1+\frac1z\biggr)^\alpha=1, \label{weighted-geometric-eq3}
\end{gather}
and
\begin{equation}\label{weighted-geometric-eq4}
h_\alpha(\overline{z})=\overline{h_\alpha(z)}.
\end{equation}
\par
For $t\in(0,\infty)$ and $\varepsilon>0$, we have
\begin{align*}
h_\alpha(-t+i\varepsilon)&=\biggl(1+\frac1{-t+i\varepsilon}\biggr)^\alpha\\
&=\biggl(\frac{1-t+i\varepsilon}{-t+i\varepsilon}\biggr)^\alpha\\
&=\biggl[\frac{(1-t+i\varepsilon)(-t-i\varepsilon)}{t^2+\varepsilon^2}\biggr]^\alpha\\
&=\biggl(\frac{t^2-t+\varepsilon^2-i\varepsilon}{t^2+\varepsilon^2}\biggr)^\alpha\\
&=\exp\biggl[\alpha\biggl(\ln\abs{\frac{t^2-t+\varepsilon^2-i\varepsilon}{t^2+\varepsilon^2}} +i\arg\frac{t^2-t+\varepsilon^2-i\varepsilon}{t^2+\varepsilon^2}\biggr)\biggr]\\
&=
\begin{cases}
\exp\biggl[\alpha\biggl(\ln\abs{\dfrac{t^2-t+\varepsilon^2-i\varepsilon}{t^2+\varepsilon^2}} & \\
\hskip2em-i\arctan\dfrac{\varepsilon}{t^2-t+\varepsilon^2}\biggr)\biggr],& t^2-t+\varepsilon^2>0\\
\exp\biggl[\alpha\biggl(\ln\abs{\dfrac{t^2-t+\varepsilon^2-i\varepsilon}{t^2+\varepsilon^2}} & \\
\hskip2em+i\arctan\dfrac{\varepsilon}{t^2-t+\varepsilon^2}-\pi i\biggr)\biggr],& t^2-t+\varepsilon^2<0\\
\exp\biggl[\alpha\biggl(\ln\dfrac{\varepsilon}{t^2+\varepsilon^2} -\dfrac{\pi i}2\biggr)\biggr],& t^2-t+\varepsilon^2=0
\end{cases}\\
&\to
\begin{cases}
\exp\biggl(\alpha\ln\dfrac{t-1}t\biggr), & t>1\\
\exp\biggl[\alpha\biggl(\ln\dfrac{1-t}t-\pi i\biggr)\biggr], & 0<t<1\\
0, & t=1
\end{cases}\\
&=
\begin{cases}
\biggl(\dfrac{t-1}t\biggr)^\alpha, & t>1\\
\biggl(\dfrac{1-t}t\biggr)^\alpha[\cos(\alpha\pi)-\sin(\alpha\pi)], & 0<t<1\\
0, & t=1
\end{cases}
\end{align*}
as $\varepsilon\to0^+$. As a result,
\begin{equation}\label{weighted-geometric-eq5}
\lim_{\varepsilon\to0^+}\Im f(-t+i\varepsilon)=
\begin{cases}
0,&t\ge1;\\
-\biggl(\dfrac1t-1\biggr)^\alpha\sin(\alpha\pi), &0<t<1.
\end{cases}
\end{equation}
\par
Let $D$ be a bounded domain with piecewise smooth boundary $\partial D$. The famous Cauchy integral formula (see~\cite[p.~113]{Gamelin-book-2001}) reads that if $f(z)$ is holomorphic on $D$ and if $f(z)$ extends smoothly to the boundary $\partial D$ of $D$, then
\begin{equation}\label{cauchy-formula=eq}
f(z)=\frac1{2\pi i}\oint_{\partial D}\frac{f(w)}{w-z}\td w,\quad z\in D.
\end{equation}
\par
For any but fixed point $z\in\mathbb{C}\setminus(-\infty,0]$, choose $0<\varepsilon<1$ and $r>0$ such that $0<\varepsilon<|z|<r$, and consider the positively oriented contour $C(\varepsilon,r)$ in $\mathbb{C}\setminus(-\infty,0]$ consisting of the half circle $z=\varepsilon e^{i\theta}$ for $\theta\in\bigl[-\frac\pi2,\frac\pi2\bigr]$ and the half lines $z=x\pm i\varepsilon$ for $x\le0$ until they cut the circle $|z|=r$, which close the contour at the points $-r(\varepsilon)\pm i\varepsilon$, where $0<r(\varepsilon)\to r$ as $\varepsilon\to0$. See Figure~\ref{note-on-li-chen-conj.eps}.
\begin{figure}[htbp]
\includegraphics[width=0.75\textwidth]{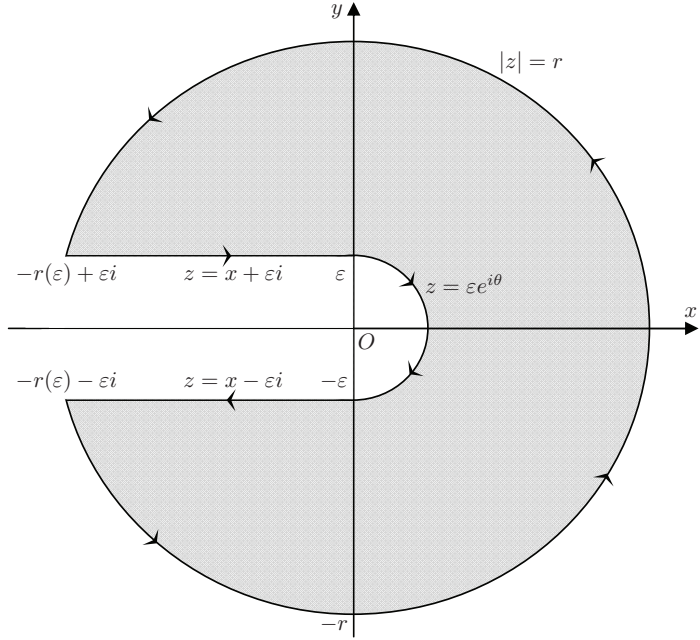}\\
\caption{The contour $C(\varepsilon,r)$}\label{note-on-li-chen-conj.eps}
\end{figure}
\par
By the above mentioned Cauchy integral formula, we have
\begin{equation}
\begin{split}\label{h(z)-Cauchy-Apply}
h_\alpha(z)&=\frac1{2\pi i}\oint_{C(\varepsilon,r)}\frac{h_\alpha(w)}{w-z}\td w\\
&=\frac1{2\pi i}\biggl[\int_{\pi/2}^{-\pi/2}\frac{i\varepsilon e^{i\theta}h\bigl(\varepsilon e^{i\theta}\bigr)}{\varepsilon e^{i\theta}-z}\td\theta +\int_{-r(\varepsilon)}^0 \frac{h_\alpha(x+i\varepsilon)}{x+i\varepsilon-z}\td x \\
&\quad+\int_0^{-r(\varepsilon)}\frac{h_\alpha(x-i\varepsilon)}{x-i\varepsilon-z}\td x +\int_{\arg[-r(\varepsilon)-i\varepsilon]}^{\arg[-r(\varepsilon)+i\varepsilon]}\frac{ir e^{i\theta}h\bigl(re^{i\theta}\bigr)}{re^{i\theta}-z}\td\theta\biggr].
\end{split}
\end{equation}
By the limit~\eqref{weighted-geometric-eq2}, it follows that
\begin{equation}\label{zf(z)=0}
\lim_{\varepsilon\to0^+}\int_{\pi/2}^{-\pi/2}\frac{i\varepsilon e^{i\theta}h_\alpha\bigl(\varepsilon e^{i\theta}\bigr)}{\varepsilon e^{i\theta}-z}\td\theta=0.
\end{equation}
In virtue of the limit~\eqref{weighted-geometric-eq3}, we can derive that
\begin{equation}\label{big-circle-int=0}
\lim_{\substack{\varepsilon\to0^+\\r\to\infty}} \int_{\arg[-r(\varepsilon)-i\varepsilon]}^{\arg[-r(\varepsilon)+i\varepsilon]}\frac{ir e^{i\theta}h_\alpha\bigl(re^{i\theta}\bigr)}{re^{i\theta}-z}\td\theta
=\lim_{r\to\infty}\int_{-\pi}^{\pi}\frac{ir e^{i\theta}h_\alpha\bigl(re^{i\theta}\bigr)}{re^{i\theta}-z}\td\theta=2\pi i.
\end{equation}
Making use of the limits~\eqref{weighted-geometric-eq4} and~\eqref{weighted-geometric-eq5} leads to
\begin{align}
&\quad\int_{-r(\varepsilon)}^0 \frac{h_\alpha(x+i\varepsilon)}{x+i\varepsilon-z}\td x
+\int_0^{-r(\varepsilon)}\frac{h_\alpha(x-i\varepsilon)}{x-i\varepsilon-z}\td x \notag\\
&=\int_{-r(\varepsilon)}^0 \biggl[\frac{h_\alpha(x+i\varepsilon)}{x+i\varepsilon-z}-\frac{h_\alpha(x-i\varepsilon)}{x-i\varepsilon-z}\biggr]\td x\notag\\
&=\int_{-r(\varepsilon)}^0\frac{(x-i\varepsilon-z)h_\alpha(x+i\varepsilon) -(x+i\varepsilon-z)h_\alpha(x-i\varepsilon)} {(x+i\varepsilon-z)(x-i\varepsilon-z)}\td x\notag\\
&=\int_{-r(\varepsilon)}^0\frac{(x-z)[h_\alpha(x+i\varepsilon)-h_\alpha(x-i\varepsilon)] -i\varepsilon[h_\alpha(x-i\varepsilon)+h_\alpha(x+i\varepsilon)]} {(x+i\varepsilon-z)(x-i\varepsilon-z)}\td x\notag\\
&=2i\int_{-r(\varepsilon)}^0\frac{(x-z)\Im h_\alpha(x+i\varepsilon) -\varepsilon\Re h_\alpha(x+i\varepsilon)} {(x+i\varepsilon-z)(x-i\varepsilon-z)}\td x\notag\\
&\to2i\int_{-r}^0\frac{\lim_{\varepsilon\to0^+}\Im h_\alpha(x+i\varepsilon)}{x-z}\td x\notag\\
&=-2i\int^r_0\frac{\lim_{\varepsilon\to0^+}\Im h_\alpha(-t+i\varepsilon)}{t+z}\td t\notag\\
&\to-2i\int^\infty_0\frac{\lim_{\varepsilon\to0^+}\Im h_\alpha(-t+i\varepsilon)}{t+z}\td t\notag\\
&=2i\sin(\alpha\pi)\int_0^1\biggl(\frac1t-1\biggr)^\alpha\frac{\td t}{t+z} \label{level0lines}
\end{align}
as $\varepsilon\to0^+$ and $r\to\infty$. Substituting equations~\eqref{zf(z)=0}, \eqref{big-circle-int=0}, and~\eqref{level0lines} into~\eqref{h(z)-Cauchy-Apply} and simplifying produce the integral representation~\eqref{weighted-geometric-eq1}. The proof of Lemma~\ref{weig-geom-lem2} is completed.
\end{proof}

\section{An integral representation of the weighted geometric mean}

Utilizing lemmas in the above section, we now prove that the weighted geometric mean $G_{x,y;\lambda}(t)$ is a Bernstein function of $t>-\min\{x,y\}$ and present an integral representation of the geometric mean $G_{x,y;\lambda}(z)$ for $z\in\mathbb{C}\setminus(-\infty,-\min\{x,y\}]$.

\begin{thm}\label{weighted-geom=mean-thm}
For $\lambda\in(0,1)$ and $x,y\in\mathbb{R}$ with $x\ne y$, the weighted geometric mean $G_{x,y;\lambda}(t)$ defined by~\eqref{weight-geometric=t} is a Bernstein function of $t>-\min\{x,y\}$.
\end{thm}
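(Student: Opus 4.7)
The plan is to reduce the theorem to showing that the one-variable function $s\mapsto s\,h_\lambda(s)$ is a Bernstein function on $(0,\infty)$, and then apply the Stieltjes representation of $h_\lambda$ furnished by Lemma~\ref{weig-geom-lem2}. Without loss of generality, assume $x>y$, so that $-\min\{x,y\}=-y$; the case $y>x$ is symmetric under the interchange $x\leftrightarrow y$, $\lambda\leftrightarrow 1-\lambda$. Introduce the affine change of variable
\begin{equation*}
s=\frac{y+t}{x-y},
\end{equation*}
which maps $(-y,\infty)$ increasingly onto $(0,\infty)$. Using the identity $1+\tfrac1s=\tfrac{x+t}{y+t}$, a direct computation gives
\begin{equation*}
G_{x,y;\lambda}(t)=(x+t)^\lambda(y+t)^{1-\lambda}=(x-y)\,s\,h_\lambda(s).
\end{equation*}
The Bernstein class is preserved under composition with an increasing affine map: if $\varphi(t)=at+b$ with $a>0$ and $f$ is Bernstein on $(0,\infty)$, then the chain rule yields $(-1)^{n}(f\circ\varphi)^{(n+1)}(t)=a^{n+1}(-1)^{n}f^{(n+1)}(\varphi(t))\ge0$ for every $n\ge0$. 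Consequently it suffices to show that $s\,h_\lambda(s)$ is a Bernstein function of $s\in(0,\infty)$.

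To carry out this remaining step, I would multiply the representation~\eqref{weighted-geometric-eq1} by $s$ and exploit $\tfrac{s}{u+s}=1-\tfrac{u}{u+s}$ to obtain
\begin{equation*}
s\,h_\lambda(s)=c_\lambda+s-\frac{\sin(\lambda\pi)}{\pi}\int_0^1\biggl(\frac{1}{u}-1\biggr)^{\lambda}\frac{u\,\td u}{u+s}
\end{equation*}
for a nonnegative constant $c_\lambda$. Differentiating in $s$ then produces
\begin{equation*}
[s\,h_\lambda(s)]'=1+\frac{\sin(\lambda\pi)}{\pi}\int_0^1\biggl(\frac{1}{u}-1\biggr)^{\lambda}\frac{u\,\td u}{(u+s)^{2}}.
\end{equation*}
For every fixed $u\in(0,1)$ the kernel $s\mapsto(u+s)^{-2}$ is completely monotonic on $(0,\infty)$, and the integrating measure is nonnegative since $\sin(\lambda\pi)>0$ for $\lambda\in(0,1)$. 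Therefore $[s\,h_\lambda(s)]'$ is a nonnegative superposition of completely monotonic functions and is itself completely monotonic; combined with the obvious positivity of $s\,h_\lambda(s)$ on $(0,\infty)$, this yields the required Bernstein property and, after pulling back through the substitution, proves the theorem.

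I expect the only real obstacle to be bookkeeping: correctly verifying the identity $G_{x,y;\lambda}(t)=(x-y)\,s\,h_\lambda(s)$, handling $y>x$ through the stated symmetry, and justifying invariance of the Bernstein class under increasing affine reparametrizations. There is no genuinely hard analytic step, because Lemma~\ref{weig-geom-lem2} has already performed the nontrivial contour integration that produces the Stieltjes representation of $h_\lambda$; the present theorem is essentially a corollary of that lemma. A naive alternative would try to differentiate $s\,h_\lambda(s)$ directly and recognize the result as $(1-\lambda)h_\lambda(s)+\lambda h_{-(1-\lambda)}(s)$, but this mixes a completely monotonic summand with a Bernstein summand and is therefore not immediately useful---which is why routing through the Stieltjes representation of Lemma~\ref{weig-geom-lem2} is the natural path.
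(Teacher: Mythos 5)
Your argument is correct, but it follows a genuinely different route from the paper's. The paper proves this theorem by a direct real-variable computation: it differentiates to get $G_{x,y;\lambda}'(t)=\lambda(1-\lambda)H_{\lambda}\bigl(\frac{y+t}{x-y}\bigr)$ with $H_{\lambda}=\frac{h_{\lambda}}{\lambda}-\frac{h_{\lambda-1}}{\lambda-1}$, invokes the complete monotonicity of $H_{\lambda}$ proved by the inductive explicit derivative formula in Lemma~\ref{Test-thm-h(t)}, and handles the remaining case via the symmetry $G_{x,y;\lambda}=G_{y,x;1-\lambda}$. Note that $(1-\lambda)h_{\lambda}+\lambda h_{\lambda-1}=\lambda(1-\lambda)H_{\lambda}$ is exactly the derivative you dismiss as the ``naive alternative''; the paper does not treat it as a mixture of a completely monotonic and a Bernstein summand, but proves complete monotonicity of that specific combination outright, so its proof of this theorem needs no complex analysis at all. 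You instead write $G_{x,y;\lambda}(t)=(x-y)\,s\,h_{\lambda}(s)$ with $s=\frac{y+t}{x-y}$ and deduce the Bernstein property of $s\,h_{\lambda}(s)$ from the Stieltjes representation of Lemma~\ref{weig-geom-lem2}, using $\frac{s}{u+s}=1-\frac{u}{u+s}$ so that after differentiation the measure $u\bigl(\frac1u-1\bigr)^{\lambda}\td u$ is manifestly nonnegative; the constant $c_{\lambda}=\frac{\sin(\lambda\pi)}{\pi}\int_0^1\bigl(\frac1u-1\bigr)^{\lambda}\td u$ is indeed finite (in fact equal to $\lambda$), and the reduction through an increasing affine change of variable and a positive constant factor is sound. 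What your route buys is that it bypasses Lemma~\ref{Test-thm-h(t)} entirely, avoids any sign analysis of a combined kernel (the analogue of showing $F(\lambda,s)>0$ in Theorem~\ref{weight-geometric-int-repres-thm}), covers all real $x\ne y$ uniformly, and essentially yields a scalar version of the integral representation of Theorem~\ref{weight-geometric-int-repres-thm} as a by-product; what it costs is reliance on the contour-integration Lemma~\ref{weig-geom-lem2}, which the paper reserves for the representation theorems, making the paper's proof of this particular statement the more elementary one. Two small points to tidy in your write-up: justify differentiation under the integral sign (dominated convergence, locally uniformly in $s$ on compact subsets of $(0,\infty)$) and state explicitly that multiplication by the positive constant $x-y$ preserves the Bernstein property.
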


\begin{proof}
When $x>y>0$, a direct differentiation yields
\begin{align}
G_{x,y;\lambda}'(t)&=\lambda(1-\lambda)\biggl[\frac1\lambda\biggl(\frac{x+t}{y+t}\biggr)^\lambda +\frac1{1-\lambda}\biggl(\frac{y+t}{x+t}\biggr)^{1-\lambda}\biggr]\notag\\
&=\lambda(1-\lambda)\biggl[\frac1\lambda\biggl(1+\frac{x-y}{y+t}\biggr)^\lambda -\frac1{\lambda-1}\biggl(1+\frac{x-y}{y+t}\biggr)^{\lambda-1}\biggr]\notag\\
&=\lambda(1-\lambda)\biggl[\frac1\lambda h_\lambda\biggl(\frac{y+t}{x-y}\biggr) -\frac1{\lambda-1}h_{\lambda-1}\biggl(\frac{y+t}{x-y}\biggr)\biggr]\notag\\
&=\lambda(1-\lambda)H_{\lambda}\biggl(\frac{y+t}{x-y}\biggr).\label{H-lambda-x-y}
\end{align}
By the complete monotonicity of the function $H_\alpha$ obtained in Lemma~\ref{Test-thm-h(t)}, it is immediate to see that the derivative $G_{x,y;\lambda}'(t)$ is completely monotonic, and so the geometric mean $G_{x,y;\lambda}(t)$ is a Bernstein function for $x>y>0$ and $\lambda\in(0,1)$. Considering the symmetry property
\begin{equation}\label{G-symmetric-lamda}
G_{x,y;\lambda}(t)=G_{y,x;1-\lambda}(t)
\end{equation}
reveals that, no matter $y>x>0$ or $x>y>0$, the geometric mean $G_{x,y;\lambda}(t)$ is a Bernstein function of $t>-\min\{x,y\}$.
\end{proof}

\begin{thm}\label{weight-geometric-int-repres-thm}
For $\lambda\in(0,1)$ and $x>y>0$, the principal branch of the weighted geometric mean $G_{x,y;\lambda}(z)$ defined by~\eqref{weight-geometric=t} has the integral representation
\begin{multline}\label{weight-geometric-int-repres-eq}
G_{x,y;\lambda}(z)=x^{\lambda}y^{1-\lambda}+z \\
+\frac{\sin(\lambda\pi)}{\pi} (x-y)\int_0^\infty  \frac{F(\lambda,(x-y)s)}s e^{-sy}(1-e^{-sz})\td s,
\end{multline}
where $z\in\mathbb{C}\setminus(-\infty,-y]$ and
\begin{equation}\label{F(lambda-s)}
F(\lambda,s)=\int_0^1\biggl(\frac1u-1\biggr)^\lambda \biggl(1-\frac{\lambda}{1-u}\biggr)e^{-us}\td u>0.
\end{equation}
Consequently, the geometric mean $G_{x,y;\lambda}(t)$ for $\lambda\in(0,1)$ and $x,y>0$ is a Bernstein function of $t>-\min\{x,y\}$.
\end{thm}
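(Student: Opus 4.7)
My plan is to leverage the derivative identity~\eqref{H-lambda-x-y} from the proof of Theorem~\ref{weighted-geom=mean-thm} together with the Stieltjes representation in Lemma~\ref{weig-geom-lem2}, obtain a Laplace-transform form for $G'_{x,y;\lambda}(t)$, integrate in $t$, and then extend to complex $z$ by analyticity. I shall work first with real $t>-y$ and $x>y>0$; the opposite case $y>x>0$ for the ``Consequently'' assertion is recovered from the symmetry~\eqref{G-symmetric-lamda}.

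Starting from $G'_{x,y;\lambda}(t)=\lambda(1-\lambda)H_\lambda((y+t)/(x-y))$, I substitute into $H_\lambda=h_\lambda/\lambda-h_{\lambda-1}/(\lambda-1)$ the representations supplied by Lemma~\ref{weig-geom-lem2} (using $\sin((\lambda-1)\pi)=-\sin(\lambda\pi)$), producing the combination $\lambda(1-\lambda)H_\lambda(w)=(1-\lambda)h_\lambda(w)+\lambda h_{\lambda-1}(w)$. The constant parts merge to $1$ and, after collecting the Stieltjes kernels, one arrives at
\begin{equation*}
\lambda(1-\lambda)H_\lambda(w)-1=\frac{\sin(\lambda\pi)}{\pi}\int_0^1 u^{-\lambda}(1-u)^{\lambda-1}(1-\lambda-u)\,\frac{\td u}{u+w}.
\end{equation*}
The key algebraic observation is that the kernel equals both $\frac{\td}{\td u}\bigl[u^{1-\lambda}(1-u)^\lambda\bigr]$ and $\bigl(\tfrac1u-1\bigr)^\lambda\bigl(1-\tfrac{\lambda}{1-u}\bigr)$; the latter form matches the integrand in~\eqref{F(lambda-s)} exactly.

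Setting $w=(y+t)/(x-y)$, writing $\tfrac{1}{u+w}=(x-y)\int_0^\infty e^{-s[u(x-y)+y+t]}\td s$, and applying Fubini on the absolutely integrable integrand gives
\begin{equation*}
G'_{x,y;\lambda}(t)-1=\frac{\sin(\lambda\pi)}{\pi}(x-y)\int_0^\infty e^{-s(y+t)}F(\lambda,(x-y)s)\,\td s.
\end{equation*}
An integration by parts in $u$, in which the boundary terms vanish because $\lambda\in(0,1)$, yields $F(\lambda,\sigma)=\sigma\int_0^1 u^{1-\lambda}(1-u)^\lambda e^{-u\sigma}\td u>0$ for $\sigma>0$, establishing the positivity claim in~\eqref{F(lambda-s)}. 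Integrating the preceding display in $\tau$ from $0$ to $t$ and once more interchanging orders converts $\int_0^t e^{-s\tau}\td\tau$ into $(1-e^{-st})/s$; together with $G_{x,y;\lambda}(0)=x^\lambda y^{1-\lambda}$ this produces~\eqref{weight-geometric-int-repres-eq} for real $t>-y$.

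To finish, I would extend the identity to $z\in\mathbb{C}\setminus(-\infty,-y]$ by analytic continuation: both sides are holomorphic on the right half-plane $\Re z>-y$ (where the integral converges absolutely and locally uniformly thanks to the exponential decay from $e^{-sy}$), they agree on $(-y,\infty)$, and the left side is the principal branch of $(x+z)^\lambda(y+z)^{1-\lambda}$ on the full slit plane. The ``Consequently'' assertion is then immediate, for~\eqref{weight-geometric-int-repres-eq} is precisely a Bernstein representation with $a=x^\lambda y^{1-\lambda}$, $b=1$, and positive density $\tfrac{\sin(\lambda\pi)}{\pi}(x-y)F(\lambda,(x-y)s)e^{-sy}/s$, while~\eqref{G-symmetric-lamda} handles the remaining case $y>x>0$. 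I expect the main obstacle to be spotting the derivative identity above; without it, the kernel emerging from the Stieltjes step looks unwieldy, and neither the compact form~\eqref{F(lambda-s)} nor its manifest positivity would be visible.
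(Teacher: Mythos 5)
Your proposal is correct and follows essentially the same route as the paper: you use the identity $G_{x,y;\lambda}'=\lambda(1-\lambda)H_\lambda\bigl(\frac{y+\cdot}{x-y}\bigr)$, feed in the Stieltjes representation of Lemma~\ref{weig-geom-lem2}, rewrite $\frac1{u+w}$ as a Laplace integral, apply Fubini, and integrate back, exactly as the paper does (your combination $(1-\lambda)h_\lambda+\lambda h_{\lambda-1}$ and your kernel $u^{-\lambda}(1-u)^{\lambda-1}(1-\lambda-u)$ coincide with the paper's $\lambda(1-\lambda)Q_\lambda(u)$, and doing the $(x-y)$ rescaling earlier is immaterial). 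The two genuine differences are minor but worth noting. First, for the positivity of $F$ you observe that the integrand is $\frac{\td}{\td u}\bigl[u^{1-\lambda}(1-u)^\lambda\bigr]e^{-us}$ and integrate by parts to get $F(\lambda,\sigma)=\sigma\int_0^1u^{1-\lambda}(1-u)^\lambda e^{-u\sigma}\td u>0$; the paper instead splits the integral at $u=1-\lambda$, compares $e^{-us}$ with $e^{-(1-\lambda)s}$ on each piece, and uses $\int_0^1(1/u-1)^\lambda\bigl(1-\frac\lambda{1-u}\bigr)\td u=0$. Your argument is cleaner and yields the useful closed alternative form of $F$ as a bonus. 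Second, you derive the identity for real $t>-y$ and then invoke analytic continuation, whereas the paper integrates the derivative formula directly in the complex variable from $0$ to $w$; both devices only justify the formula where the right-hand integral actually converges, namely $\Re z>-y$ (since $F(\lambda,(x-y)s)/s$ decays only polynomially while $e^{-s(y+z)}$ grows for $\Re z<-y$), so your proof is no weaker than the paper's on the stated domain $\mathbb{C}\setminus(-\infty,-y]$ -- that overclaim is inherited from the theorem statement itself, not introduced by you.
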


\begin{proof}
By the integral representation~\eqref{weighted-geometric-eq1} in Lemma~\ref{weig-geom-lem2}, we have
\begin{equation*}
H_{\lambda}\biggl(\frac{y+z}{x-y}\biggr)
=\frac1{\lambda(1-\lambda)}+ \int_0^1Q_{\lambda}(u)\frac{\td u}{u+\frac{y+z}{x-y}},
\end{equation*}
where $H_{\lambda}$ is defined by~\eqref{H-lambda-x-y} and
\begin{align*}
Q_{\lambda}(u)&=\frac{\sin(\lambda\pi)}{\lambda\pi}\biggl(\frac1u-1\biggr)^\lambda -\frac{\sin[(\lambda-1)\pi]}{(\lambda-1)\pi}\biggl(\frac1u-1\biggr)^{\lambda-1}\\
&=\frac{\sin(\lambda\pi)}{\pi}\biggl(\frac1u-1\biggr)^\lambda\biggl[\frac1\lambda -\frac{1}{1-\lambda}\biggl(\frac1u-1\biggr)^{-1}\biggr]\\
&=\frac{\sin(\lambda\pi)}{\lambda(1-\lambda)\pi}\biggl(\frac1u-1\biggr)^\lambda \biggl(1-\frac{\lambda}{1-u}\biggr).
\end{align*}
Accordingly,
\begin{align*}
G_{x,y;\lambda}'(z)&=1+ \lambda(1-\lambda)\int_0^1Q_{\lambda}(u)\frac{\td u}{u+\frac{y+z}{x-y}}\\
&=1+ \frac{\sin(\lambda\pi)}{\pi} \int_0^\infty F(\lambda,s) \exp\biggl(-\frac{y+z}{x-y}s\biggr)\td s.
\end{align*}
Integrating on both sides of this equation with respect to $z$ from $0$ to $w$ gives
\begin{align*}
G_{x,y;\lambda}(w)&=G_{x,y;\lambda}(0)+w\\
&\quad+\frac{\sin(\lambda\pi)}{\pi} \int_0^\infty F(\lambda,s) \biggl[\int_0^w\exp\biggl(-\frac{y+z}{x-y}s\biggr)\td z\biggr] \td s\\
&=x^{\lambda}y^{1-\lambda}+w+\frac{\sin(\lambda\pi)}{\pi} (x-y)\\
&\quad\times\int_0^\infty  \frac{F(\lambda,s)}s\exp\biggl(-\frac{sy}{x-y}\biggr) \biggl[1-\exp\biggl(-\frac{sw}{x-y}\biggr)\biggr]\td s.
\end{align*}
Changing the variable $s$ by $(x-y)s$ and replacing $w$ by $z$ in the above integral lead to the integral representation~\eqref{weight-geometric-int-repres-eq}.
\par
It is obvious that
\begin{align*}
F(\lambda,s)&=\biggl(\int_0^{1-\lambda}+\int_{1-\lambda}^1\biggr)\biggl(\frac1u-1\biggr)^\lambda \biggl(1-\frac{\lambda}{1-u}\biggr)e^{-us}\td u\\
&>e^{-(1-\lambda)s}\biggl(\int_0^{1-\lambda}+\int_{1-\lambda}^1\biggr)\biggl(\frac1u-1\biggr)^\lambda \biggl(1-\frac{\lambda}{1-u}\biggr)\td u\\
&=e^{-(1-\lambda)s}\int_0^1\biggl(\frac1u-1\biggr)^\lambda \biggl(1-\frac{\lambda}{1-u}\biggr)\td u\\
&=0.
\end{align*}
The proof of Theorem~\ref{weight-geometric-int-repres-thm} is completed.
\end{proof}

\begin{cor}\label{weight-AG-cor}
For $\lambda\in(0,1)$ and $x>y>0$, the difference between the weighted arithmetic and geometric means has the following integral representation
\begin{equation}\label{weight-AG-cor=eq}
\bigl[\lambda x+(1-\lambda)y\bigr]-x^{\lambda}y^{1-\lambda} =\frac{\sin(\lambda\pi)}{\pi} (x-y)\int_0^\infty  \frac{F(\lambda,(x-y)s)}s e^{-sy}\td s,
\end{equation}
where $F(\lambda,s)$ is defined by~\eqref{F(lambda-s)}. Consequently, the weighted arithmetic mean of two positive numbers is not less than the weighted geometric mean of two positive numbers.
\end{cor}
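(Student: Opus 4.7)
The plan is to derive the identity \eqref{weight-AG-cor=eq} by letting $z\to+\infty$ along positive reals in the representation \eqref{weight-geometric-int-repres-eq} of Theorem~\ref{weight-geometric-int-repres-thm}, and then to read off the weighted AM-GM inequality from the manifest positivity of the resulting integrand. I first treat the case $x>y>0$. Rewriting \eqref{weight-geometric-int-repres-eq} as
\begin{equation*}
G_{x,y;\lambda}(z)-z=x^{\lambda}y^{1-\lambda}+\frac{\sin(\lambda\pi)}{\pi}(x-y)\int_0^\infty\frac{F(\lambda,(x-y)s)}{s}e^{-sy}\bigl(1-e^{-sz}\bigr)\td s
\end{equation*}
and expanding the left-hand side as
\begin{equation*}
G_{x,y;\lambda}(z)-z=z\biggl[\biggl(1+\frac{x}{z}\biggr)^{\lambda}\biggl(1+\frac{y}{z}\biggr)^{1-\lambda}-1\biggr]\longrightarrow\lambda x+(1-\lambda)y
\end{equation*}
as $z\to+\infty$, by a first-order Taylor expansion, the corollary will follow once the limit $z\to+\infty$ can be brought inside the integral on the right.

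To justify this exchange by dominated convergence I observe that, for $z>0$ real, one has $0\le 1-e^{-sz}\le 1$ with $1-e^{-sz}\to 1$ pointwise, so it suffices to check that
\begin{equation*}
\frac{F(\lambda,(x-y)s)}{s}\,e^{-sy}
\end{equation*}
is integrable over $(0,\infty)$. At infinity the factor $e^{-sy}$ combined with the at-most-polynomial growth of $F(\lambda,\cdot)$ that is visible from the defining integral \eqref{F(lambda-s)} gives integrability. Near $s=0$ the apparent $1/s$ singularity is removable because $F(\lambda,0)=0$, which is the vanishing
\begin{equation*}
\int_0^1\biggl(\frac1u-1\biggr)^{\lambda}\biggl(1-\frac{\lambda}{1-u}\biggr)\td u=0
\end{equation*}
that was already used in the positivity proof inside Theorem~\ref{weight-geometric-int-repres-thm}; differentiating \eqref{F(lambda-s)} under the integral sign further shows $F(\lambda,\cdot)$ is $C^1$ on $[0,\infty)$, so $F(\lambda,(x-y)s)/s$ remains bounded as $s\to 0^+$. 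Dominated convergence then yields \eqref{weight-AG-cor=eq} for $x>y>0$.

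The AM-GM consequence is then immediate: for $\lambda\in(0,1)$ and $x>y>0$ every factor on the right-hand side of \eqref{weight-AG-cor=eq} is strictly positive, since $\sin(\lambda\pi)>0$, $x-y>0$, and $F(\lambda,s)>0$ by Theorem~\ref{weight-geometric-int-repres-thm}. Hence $\lambda x+(1-\lambda)y>x^{\lambda}y^{1-\lambda}$. The case $y>x>0$ follows from the symmetry \eqref{G-symmetric-lamda}, which swaps $x\leftrightarrow y$ and $\lambda\leftrightarrow 1-\lambda$ and therefore leaves both the weighted arithmetic and weighted geometric means invariant; the boundary case $x=y$ is trivial. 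The main obstacle in the argument is the dominated-convergence step, and it rests entirely on the non-obvious identity $F(\lambda,0)=0$; without this cancellation the integrand would appear to behave like $1/s$ near the origin and the exchange of limit and integral would fail.
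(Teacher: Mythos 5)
Your argument is correct and takes essentially the same route as the paper: let $z\to\infty$ in \eqref{weight-geometric-int-repres-eq} and evaluate $\lim_{z\to\infty}[G_{x,y;\lambda}(z)-z]=\lambda x+(1-\lambda)y$, then read off the inequality from the positivity of $F$. Your dominated-convergence step, hinging on $F(\lambda,0)=0$, simply makes rigorous the interchange of limit and integral that the paper carries out without comment.
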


\begin{proof}
This follows from taking $z\to\infty$ on both sides of~\eqref{weight-geometric-int-repres-eq} and making use of the fact that
\begin{align*}
\lim_{z\to\infty}[G_{x,y;\lambda}(z)-z]
&=\lim_{z\to\infty}\biggl\{z\biggl[\biggl(1+\frac{x}z\biggr)^\lambda \biggl(1+\frac{y}z\biggr)^{1-\lambda}-1\biggr]\biggr\} \\
&=\lim_{z\to0}\frac{(1+{x}z)^\lambda(1+{y}z)^{1-\lambda}-1}z \\
&=\lambda x+(1-\lambda)y.
\end{align*}
Corollary~\ref{weight-AG-cor} is thus proved.
\end{proof}

\section{An integral representation of the logarithmic mean}

Employing the integral representation~\eqref{weight-geometric-int-repres-eq} in Theorem~\ref{weight-geometric-int-repres-thm}, we now derive an integral representation of the logarithmic mean $L_{x,y}(z)$ for $z\in\mathbb{C}\setminus(-\infty,-\min\{x,y\}]$.

\begin{thm}\label{log=mean-thm}
For $x>y>0$, the logarithmic mean $L_{x,y}(z)$ defined by~\eqref{log-mean+t-eq} has the integral representation
\begin{equation}\label{log-mean-int-express}
L_{x,y}(z)=L(x,y)+z +\frac{x-y}{\pi}\int_0^\infty \frac{P_{x,y}(s)}s e^{-sy}\bigl(1-e^{-sz}\bigr)\td s
\end{equation}
for $z\in\mathbb{C}\setminus(-\infty,-y]$, where
\begin{equation}\label{P(x,y,s)-dfn}
P_{x,y}(s)=\int_0^1\sin(\lambda\pi)F(\lambda,(x-y)s)\td\lambda
\end{equation}
and the function $F$ is defined by~\eqref{F(lambda-s)}.
Consequently, the logarithmic mean $L_{x,y}(t)$ is a Bernstein function of $t>-\min\{x,y\}$.
\end{thm}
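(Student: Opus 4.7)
The plan is to reduce the theorem to the representation of the weighted geometric mean already obtained in Theorem~\ref{weight-geometric-int-repres-thm}. Applying the classical identity \eqref{Log-Mean-int-first} to the shifted arguments $x+z$, $y+z$ gives
\begin{equation*}
L_{x,y}(z)=\int_0^1(x+z)^\lambda(y+z)^{1-\lambda}\td\lambda=\int_0^1 G_{x,y;\lambda}(z)\td\lambda
\end{equation*}
on the principal branch for $z\in\mathbb{C}\setminus(-\infty,-y]$. The first step is then to substitute \eqref{weight-geometric-int-repres-eq} into this $\lambda$-integral and split the result into three pieces.

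Two of these pieces are immediate: $\int_0^1 x^\lambda y^{1-\lambda}\td\lambda=L(x,y)$ by \eqref{Log-Mean-int-first} with $z=0$, and $\int_0^1 z\,\td\lambda=z$. The remaining piece is the double integral
\begin{equation*}
\frac{x-y}{\pi}\int_0^1\sin(\lambda\pi)\int_0^\infty\frac{F(\lambda,(x-y)s)}{s}e^{-sy}\bigl(1-e^{-sz}\bigr)\td s\,\td\lambda,
\end{equation*}
and the whole game is to swap the order of integration so that the inner $\lambda$-integral becomes the kernel $P_{x,y}(s)$ defined in \eqref{P(x,y,s)-dfn}. This swap produces \eqref{log-mean-int-express} directly.

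The main obstacle is justifying the Fubini swap. For real $t>-y$ in place of $z$, both factors in the integrand are non-negative, so Tonelli applies once absolute integrability on $[0,1]\times(0,\infty)$ is checked: near $s=0$ the bound $1-e^{-st}=O(s)$ cancels the $1/s$, and $F(\lambda,(x-y)s)$ stays bounded uniformly in $\lambda\in[0,1]$; near $s=\infty$ the factor $e^{-sy}$ provides exponential decay that dominates any polynomial growth in $s$ of $F(\lambda,(x-y)s)$. For complex $z$ in the slit plane the identity then extends by analytic continuation, since both sides of \eqref{log-mean-int-express} are holomorphic in $z\in\mathbb{C}\setminus(-\infty,-y]$ and agree on the real ray $(-y,\infty)$.

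Finally, for the Bernstein conclusion, the positivity $F(\lambda,s)>0$ from Theorem~\ref{weight-geometric-int-repres-thm} combined with $\sin(\lambda\pi)\ge0$ on $[0,1]$ gives $P_{x,y}(s)\ge0$. Differentiating \eqref{log-mean-int-express} under the integral with respect to real $t>-y$ yields
\begin{equation*}
L_{x,y}'(t)=1+\frac{x-y}{\pi}\int_0^\infty P_{x,y}(s)e^{-s(y+t)}\td s,
\end{equation*}
the Laplace transform of a non-negative measure plus the constant $1$. Since this is manifestly completely monotonic in $t$, we conclude that $L_{x,y}$ is a Bernstein function on $(-y,\infty)=(-\min\{x,y\},\infty)$; the case $y>x>0$ is covered by the symmetry $L(x,y)=L(y,x)$.
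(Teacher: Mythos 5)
Your proposal is correct and follows essentially the same route as the paper, which proves the theorem precisely by integrating~\eqref{weight-geometric-int-repres-eq} in $\lambda$ over $[0,1]$ and invoking~\eqref{Log-Mean-int-first}; you merely supply the Fubini--Tonelli and analytic-continuation details that the paper leaves implicit. (One tiny caveat: the factor $1-e^{-st}$ is nonnegative only for $t\ge0$, not on all of $(-y,\infty)$, but restricting the Tonelli step to $t>0$ and then continuing analytically, as you already do, closes this gap.)
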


\begin{proof}
This may be deduced from integrating with respect to $\lambda$ from $0$ to $1$ on both sides of~\eqref{weight-geometric-int-repres-eq} and considering~\eqref{Log-Mean-int-first}.
\end{proof}

\begin{cor}\label{A-L-diff-cor}
For $x>y>0$, the difference between the arithmetic and logarithmic means satisfies
\begin{equation}
A(x,y)-L(x,y)=\frac{x-y}{\pi}\int_0^\infty \frac{P_{x,y}(s)}s e^{-sy}\td s,
\end{equation}
where the function $P_{x,y}(s)$ is defined by~\eqref{P(x,y,s)-dfn}.
\end{cor}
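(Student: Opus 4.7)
The plan is to deduce Corollary~\ref{A-L-diff-cor} from Theorem~\ref{log=mean-thm} in exactly the same way Corollary~\ref{weight-AG-cor} is obtained from Theorem~\ref{weight-geometric-int-repres-thm}: by letting $z\to\infty$ along the positive real axis in the integral representation~\eqref{log-mean-int-express}. The left-hand side requires computing $\lim_{z\to\infty}[L_{x,y}(z)-z]$; writing
\begin{equation*}
L_{x,y}(z)=\frac{x-y}{\ln(1+(x-y)/(y+z))}
\end{equation*}
and expanding $\ln(1+u)=u-u^2/2+O(u^3)$ as $u=(x-y)/(y+z)\to0$ yields $L_{x,y}(z)=z+(x+y)/2+O(1/z)$, so the limit equals $A(x,y)$.

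On the right-hand side of~\eqref{log-mean-int-express}, for each fixed $s>0$ the factor $1-e^{-sz}$ increases monotonically to $1$ as $z\nearrow\infty$, while $P_{x,y}(s)\ge0$ inherits nonnegativity from the bound $F(\lambda,s)>0$ established inside the proof of Theorem~\ref{weight-geometric-int-repres-thm}. Hence monotone convergence will apply as soon as the limit integrand $P_{x,y}(s)e^{-sy}/s$ is integrable on $(0,\infty)$. Integrability at infinity is immediate from the decay $e^{-sy}$ with $y>0$ together with a crude uniform bound on $P_{x,y}(s)$ coming from~\eqref{P(x,y,s)-dfn} and~\eqref{F(lambda-s)}.

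The main obstacle is integrability near $s=0$, where the $1/s$ factor is harmful unless $P_{x,y}(0)=0$. I would verify this by showing $F(\lambda,0)=0$ for every $\lambda\in(0,1)$, which reduces to the identity
\begin{equation*}
\int_0^1\biggl(\frac1u-1\biggr)^\lambda\biggl(1-\frac{\lambda}{1-u}\biggr)\td u=0;
\end{equation*}
this identity is actually implicit at the end of the proof of Theorem~\ref{weight-geometric-int-repres-thm}, and it follows directly from the beta-function evaluations $B(1-\lambda,1+\lambda)=\lambda\pi/\sin(\lambda\pi)$ and $B(1-\lambda,\lambda)=\pi/\sin(\lambda\pi)$ via the Euler reflection formula $\Gamma(\lambda)\Gamma(1-\lambda)=\pi/\sin(\lambda\pi)$. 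Once the two steps above are in place, rearranging the limit of~\eqref{log-mean-int-express} produces the claimed identity. As an alternative which sidesteps the asymptotic expansion of $L_{x,y}$ entirely, one may integrate both sides of~\eqref{weight-AG-cor=eq} against $\td\lambda$ over $[0,1]$ and exchange the order of integration by Fubini's theorem, using $\int_0^1[\lambda x+(1-\lambda)y]\td\lambda=A(x,y)$ together with the integral representation~\eqref{Log-Mean-int-first}.
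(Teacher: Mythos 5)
Your proposal follows the paper's own route exactly: the paper proves Corollary~\ref{A-L-diff-cor} precisely by letting $z\to\infty$ in~\eqref{log-mean-int-express} and invoking $\lim_{z\to\infty}[L_{x,y}(z)-z]=A(x,y)$, and your added justifications (the expansion giving $L_{x,y}(z)=z+A(x,y)+O(1/z)$, monotone convergence, and the beta-function identity $F(\lambda,0)=0$ ensuring integrability near $s=0$) are correct details the paper leaves implicit. The alternative you sketch, integrating~\eqref{weight-AG-cor=eq} in $\lambda$ over $[0,1]$ and using~\eqref{Log-Mean-int-first}, is also valid but is not needed.
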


\begin{proof}
This may be derived from letting $z\to\infty$ on both sides of~\eqref{log-mean-int-express} and using the fact that $\lim_{z\to\infty}[L_{x,y}(z)-z]=A(x,y)$. Corollary~\ref{A-L-diff-cor} is thus proved.
\end{proof}

\begin{rem}
The multi-variable analogy of the integral representation~\eqref{weight-geometric-int-repres-eq} has been established in~\cite{AG-Ineq-New-Proof.tex} and has been employed in~\cite{1st-Sirling-Number-2012.tex} to find an integral representation of Stirling numbers of the first kind. Some results in this paper were included in the thesis~\cite{Zhang-Xiao-Jing-Thesis.tex}.
\end{rem}

\begin{rem}
After this paper was completed, we found the paper~\cite{A.B-CM-Mean} which is motivated by the paper~\cite{log-mean-comp-mon.tex-mia} and discovered some conditions on $r$ and $s$ such that Stolarsky's $E(r,s;x+t,y+t)$ are Bernstein functions of $t$.
\end{rem}


\begin{thebibliography}{99}

\bibitem{Atanassov}
R. D. Atanassov and U. V. Tsoukrovski, \textit{Some properties of a class of logarithmically completely monotonic functions}, C. R. Acad. Bulgare Sci. \textbf{41} (1988), no.~2, 21\nobreakdash--23.

\bibitem{CBerg}
C. Berg, \textit{Integral representation of some functions related to the gamma function}, Mediterr. J. Math. \textbf{1} (2004), no.~4, 433\nobreakdash--439; Available online at \url{http://dx.doi.org/10.1007/s00009-004-0022-6}.

\bibitem{A.B-CM-Mean}
\'A. Besenyei, \textit{On comoplete monotonicity of some functions related to means}, Math. Inequal. Appl. \textbf{16} (2013), no.~1, 233\nobreakdash--239; Available online at \url{http://dx.doi.org/10.7153/mia-16-17}.

\bibitem{bullenmean}
P. S. Bullen, \textit{Handbook of Means and Their Inequalities}, Mathematics and its Applications, Volume 560, Kluwer Academic Publishers, Dordrecht-Boston-London, 2003.

\bibitem{Chen-Qi-Srivastava-09.tex}
C.-P. Chen, F. Qi, and H. M. Srivastava, \textit{Some properties of functions related to the gamma and psi functions}, Integral Transforms Spec. Funct. \textbf{21} (2010), no.~2, 153\nobreakdash--164; Available online at \url{http://dx.doi.org/10.1080/10652460903064216}.

\bibitem{Gamelin-book-2001}
T. W. Gamelin, \textit{Complex Analysis}, Undergraduate Texts in Mathematics, Springer, New York-Berlin-Heidelberg, 2001.

\bibitem{psi-proper-fraction-degree-two.tex}
B.-N. Guo and F. Qi, \textit{A completely monotonic function involving the tri-gamma function and with degree one}, Appl. Math. Comput. \textbf{218} (2012), no.~19, 9890\nobreakdash--9897; Available online at \url{http://dx.doi.org/10.1016/j.amc.2012.03.075}.

\bibitem{absolute-mon-simp.tex}
B.-N. Guo and F. Qi, \textit{A property of logarithmically absolutely monotonic functions and the logarithmically complete monotonicity of a power-exponential function}, Politehn. Univ. Bucharest Sci. Bull. Ser. A Appl. Math. Phys. \textbf{72} (2010), no.~2, 21\nobreakdash--30.

\bibitem{emv-log-convex-simple.tex}
B.-N. Guo and F. Qi, \textit{A simple proof of logarithmic convexity of extended mean values}, Numer. Algorithms \textbf{52} (2009), no.~1, 89\nobreakdash--92; Available online at \url{http://dx.doi.org/10.1007/s11075-008-9259-7}.

\bibitem{Guo-Qi-Filomat-2011-May-12.tex}
B.-N. Guo and F. Qi, \textit{The function $(b^x-a^x)/x$: Logarithmic convexity and applications to extended mean values}, Filomat \textbf{25} (2011), no.~4, 63\nobreakdash--73; Available online at \url{http://dx.doi.org/10.2298/FIL1104063G}.

\bibitem{Qi-Springer-2012-Srivastava.tex}
B.-N. Guo and F. Qi, \textit{The function $(b^x-a^x)/x$: Ratio's properties}, available online at \url{http://arxiv.org/abs/0904.1115}.

\bibitem{mpf-1993}
D. S. Mitrinovi\'c, J. E. Pe\v{c}ari\'c, and A. M. Fink, \textit{Classical and New Inequalities in Analysis}, Kluwer Academic Publishers, 1993.

\bibitem{gamma-psi-batir.tex}
F. Qi, \textit{A new lower bound in the second Kershaw's double inequality}, J. Comput. Appl. Math. \textbf{214} (2008), no.~2, 610\nobreakdash--616; Availbale online at \url{http://dx.doi.org/10.1016/j.cam.2007.03.016}.

\bibitem{1st-Sirling-Number-2012.tex}
F. Qi, \textit{Integral representations and properties of Stirling numbers of the first kind}, J. Number Theory \textbf{133} (2013), no.~7, 2307\nobreakdash--2319; Available online at \url{http://dx.doi.org/10.1016/j.jnt.2012.12.015}.

\bibitem{compmon2}
F. Qi and C.-P. Chen, \textit{A complete monotonicity property of the gamma function}, J. Math. Anal. Appl. \textbf{296} (2004), no.~2, 603\nobreakdash--607; Available online at \url{http://dx.doi.org/10.1016/j.jmaa.2004.04.026}.

\bibitem{log-mean-comp-mon.tex-mia}
F. Qi and S.-X. Chen, \textit{Complete monotonicity of the logarithmic mean}, Math. Inequal. Appl. \textbf{10} (2007), no.~4, 799\nobreakdash--804; Available online at \url{http://dx.doi.org/10.7153/mia-10-73}.

\bibitem{minus-one}
F. Qi and B.-N. Guo, \textit{Complete monotonicities of functions involving the gamma and digamma functions}, RGMIA Res. Rep. Coll. \textbf{7} (2004), no.~1, Art.~8, 63\nobreakdash--72; Available online at \url{http://rgmia.org/v7n1.php}.

\bibitem{new-upper-kershaw-JCAM.tex}
F. Qi, S. Guo, and S.-X. Chen, \textit{A new upper bound in the second Kershaw's double inequality and its generalizations}, J. Comput. Appl. Math. \textbf{220} (2008), no.~1-2, 111\nobreakdash--118; Available online at \url{http://dx.doi.org/10.1016/j.cam.2007.07.037}.

\bibitem{Open-TJM-2003-Banach.tex}
F. Qi, C.-F. Wei, and B.-N. Guo, \textit{Complete monotonicity of a function involving the ratio of gamma functions and applications}, Banach J. Math. Anal. \textbf{6} (2012), no.~1, 35\nobreakdash--44.

\bibitem{AG-Ineq-New-Proof.tex}
F. Qi, X.-J. Zhang, and W.-H. Li, \textit{A new proof of the geometric-arithmetic mean inequality by Cauchy's integral formula}, available online at \url{http://arxiv.org/abs/1301.6432}.

\bibitem{Geomeric-Mean-Complete-revised.tex}
F. Qi, X.-J. Zhang, and W.-H. Li, \textit{Some Bernstein functions and integral representations concerning harmonic and geometric means}, available online at \url{http://arxiv.org/abs/1301.6430}.

\bibitem{Schilling-Song-Vondracek-2010}
R. L. Schilling, R. Song, and Z. Vondra\v{c}ek, \textit{Bernstein Functions}, de Gruyter Studies in Mathematics \textbf{37}, De Gruyter, Berlin, Germany, 2010.

\bibitem{stolarsky-mean}
K. B. Stolarsky, \textit{Generalizations of the logarithmic mean}, Math. Mag. \textbf{48} (1975), 87\nobreakdash--92.

\bibitem{widder}
D. V. Widder, \textit{The Laplace Transform}, Princeton University Press, Princeton, 1946.

\bibitem{Zhang-Xiao-Jing-Thesis.tex}
X.-J. Zhang, \textit{Integral Representations, Properties, and Applications of Three Classes of Functions}, Thesis supervised by Professor Feng Qi and submitted for the Master Degree of Science at Tianjin Polytechnic University in January 2013. (Chinese)

\end{thebibliography}
\end{document}